\numberwithin{equation}{section}
\newtheorem{thm}{Theorem}
\newtheorem{lemma}[thm]{Lemma}
\newtheorem{remark}[thm]{Remark}
\newtheorem{corollary}[thm]{Corollary}
\newtheorem{proposition}[thm]{Proposition}
{\rm}
\newtheorem{example}{Example}{\rm}
{\rm}
\def\talpha{\tilde{\alpha}}
\def\beq{\begin{equation} }
\def\eeq{\end{equation} }
\def\e{\hbox{\rm e}}
\def\C{\mathbb{C}}
\def\N{\mathbb{N}}
\def\M{\mathbf{M}}
\def\R{\mathbb{R}}
\def\A{\mathbb{A}}
\def\P{\mathbf{P}}
\def\si{\mathbf{\Sigma}}
\def\K{\mathbf{K}}
\def\B{\mathbf{B}}
\def\Q{\mathbf{Q}}
\def\M{\mathbf{M}}
\def\G{\mathcal{G}}
\def\v{\mathbf{v}}
\def\f{\mathbf{f}}
\def\h{\mathbf{h}}
\def\u{\mathbf{u}}
\def\f{\mathbf{f}}
\def\x{\mathbf{x}}
\def\y{\mathbf{y}}
\def\z{\mathbf{z}}
\def\s{\mathcal{S}}
\def\dis{\displaystyle}
\def\supp{{\rm supp}\,}
\def\R{\mathbb{R}}
\def\B{\mathbf{B}}
\def\C{\mathbf{C}}
\def\g{\mathbf{g}}
\def\blambda{\mathbf{\lambda}}
\def\det{{\rm det}\,}
\def\vol{{\rm vol}\,}
\def\dis{\displaystyle}
\def\tv{\tilde{\v}}
\begin{document}
\title[level set]{Recovering an homogeneous polynomial
from moments of its level set}
\author{Jean B. Lasserre}

\begin{abstract}
Let $\K:=\{\x: g(\x)\leq 1\}$ be the compact sub-level set of some homogeneous polynomial
$g$. Assume that the only knowledge about $\K$ is the degree of $g$ as well as 
the moments of the Lebesgue measure on $\K$ up to order $2d$. Then 
the vector of coefficients of $g$ is solution of a simple linear system whose associated matrix is nonsingular.
In other words, the moments up to order $2d$
of the Lebesgue measure on $\K$ encode all information on the homogeneous polynomial $g$ 
that defines $\K$ (in fact, only moments of order $d$ and $2d$ are needed). 

\end{abstract}

\keywords{homogeneous polynomials; sublevel sets; moments; inverse problem from moments}

\maketitle


\section{Introduction}
The inverse problem of reconstructing a geometrical object $\K\subset\R^n$
from the only knowledge of moments of some measure $\mu$ whose support is $\K$
 is a fundamental problem 
 in both applied and pure mathematics with important applications in e.g.
 computer tomography, inverse potentials, signal processing, and statistics and probability, to cite a few.
In computer tomography, for instance, the X-ray images of an object can be used to estimate the moments of the
underlying mass distribution, from which one seeks to recover the shape of the object that appears on some given
images. In gravimetry applications, the measurements of the gravitational field can be converted into information
concerning the moments, from which one seeks to recover the shape of the source of the anomaly.

Of course, {\it exact} reconstruction of objects $\K\subset\R^n$ is in general impossible unless
$\K$ has very specific properties. For instance, if $\K$ is a convex polytope then
exact recovery of all its vertices has been shown to be possible via a variant of what is known as {\it prony} method. 
Only a rough bound on the number of vertices is required and relatively few moments suffice
for exact recovery. For more details the interested reader is referred to the recent contribution
of Gravin et al. \cite{gravin} and the references therein. On the other hand, 
Cuyt et al. \cite{cuyt} have shown that
{\it approximate} recovery of a general $n$-dimensional shape is possible by using an
 interesting property of multi-dimensional Pad\'e
approximants, analogous to the Fourier slice theorem for the Radon transform.

\subsection*{Contribution} From previous contributions and their references, it is transparent that exact recovery of an $n$-dimensional shape
is a difficult problem that can be solved only in a few cases. And so identifying such cases is of theoretical and practical interest.
The goal of this paper is to identify one such case as we show that
exact recovery is possible when $\K\subset\R^n$ is the (compact) sublevel set $\{\x\in\R^n \,:\,g(\x)\leq 1\}$
associated with an homogeneous polynomial $g$. By exact recovery we mean recovery of {\it all} coefficients of the polynomial $g$.
In fact, exact recovery is not only possible but rather straightforward as it suffices to solve a linear system with
a nonsingular matrix! Moreover, only moments of order $d$ and $2d$ of the Lebesgue measure on $\K$ are needed.
As already mentioned, exact recovery is possible only if $\K$
has very specific properties and indeed, crucial in the proof is a property of levels sets associated with homogeneous polynomials
(and in fact, also true for level sets of positively homogeneous nonnegative functions).

\section{Main result}

\subsection{Notation and definitions}

Let $\R[\x]$ be the ring of polynomials in the variables $\x=(x_1,\ldots,x_n)$ and let
$\R[\x]_d$ be the vector space of polynomials of degree at most $d$
(whose dimension is $s(d):={n+d\choose n}$).
For every $d\in\N$, let  $\N^n_d:=\{\alpha\in\N^n:\vert\alpha\vert \,(=\sum_i\alpha_i)=d\}$, 
and
let $\v_d(\x)=(\x^\alpha)$, $\alpha\in\N^n$, be the vector of monomials of the canonical basis 
$(\x^\alpha)$ of $\R[\x]_{d}$. 
Denote by  $\s_k$ the space of $k\times k$ real symmetric matrices with
scalar product $\langle \B,\C\rangle={\rm trace}\,(\B\C)$; also, the notation $\B\succeq0$ 
(resp. $\B\succ0$) stands for $\B$ is positive semidefinite (resp. positive definite).

A polynomial $f\in\R[\x]_d$ is written
\[\x\mapsto f(\x)\,=\,\sum_{\alpha\in\N^n}f_\alpha\,\x^\alpha,\]
for some vector of coefficients $\f=(f_\alpha)\in\R^{s(d)}$.

A real-valued polynomial $g:\R^n\to\R$ is homogeneous of degree $d$ ($d\in\N$)
if $g(\lambda\x)=\lambda^dg(\x)$ for all $\lambda$ and all $\x\in\R$. 
Given $g\in\R[\x]$, denote by $G\subset\R^n$ the sublevel set $\{\x\,:\,g(\x)\leq 1\}$.

If $g$ is homogeneous then $G$ is compact only if $g$ is nonnegative on $\R^n$
(and so $d$ is even).
Indeed suppose that $g(\x_0)<0$ for some $\x_0\in\R^n$; then by homogeneity,
$g(\lambda \x_0)<0$ for all $\lambda>0$ and so $G$ contains a half-line and cannot be compact.

\subsection{Main result}

The main result is based on the following result of independent interest valid for 
positively homogeneous functions (and not only homogeneous polynomials).
A function $f:\R^n\to\R$ is positively homogeneous of degree $d\in\R$ if
$f(\lambda\x)=\lambda^df(\x)$ for all $\lambda>0$ and all $\x\in\R^n$.
\begin{lemma}
Let $f:\R^n\to\R$ be a measurable, positively homogeneous and nonnegative function of degree $0<d\in\R$, with bounded level set
$\{\x\,:\,f(\x)\leq 1\}$. Then for every $k\in\N$ and $\alpha\in\N^n$:

\begin{equation}
\label{lem1-1}
\int_{\{\x\,:\,f(\x)\leq 1\}}\x^\alpha\,f(\x)^k\,d\x\,=\,\frac{n+\vert\alpha\vert}{n+kd+\vert\alpha\vert}\,\int_{\{\x\,:\,f(\x)\leq 1\}}\,\x^\alpha\,d\x.\\
\end{equation}
\end{lemma}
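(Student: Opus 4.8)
The plan is to reduce everything to a single scaling identity for the sublevel sets and then to integrate one variable by hand. First I would record the elementary consequences of positive homogeneity. Since $f(\lambda\x)=\lambda^d f(\x)$ for all $\lambda>0$ and $d>0$, taking $\x=0$ forces $f(0)=0$, and for every $t>0$ the substitution $\x=t^{1/d}\y$ shows that $\{\x:f(\x)\le t\}=t^{1/d}K$, where I abbreviate $K:=\{\x:f(\x)\le 1\}$; in particular the sublevel sets are nested dilates of the single bounded set $K$.

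Next I would introduce the weighted volume of these dilated sublevel sets. Set $F(t):=\int_{\{\x:f(\x)\le t\}}\x^\alpha\,d\x$ for $t>0$. Using $\{\x:f(\x)\le t\}=t^{1/d}K$ and the change of variables $\x=t^{1/d}\y$ (Jacobian $t^{n/d}$, and $\x^\alpha=t^{\va/d}\y^\alpha$) I obtain
\[
F(t)\,=\,t^{(n+\va)/d}\,\int_K\y^\alpha\,d\y\,=\,t^{(n+\va)/d}\,F(1),
\]
so that $F$ is, up to the constant $F(1)=\int_K\x^\alpha\,d\x$, an explicit power of $t$, hence absolutely continuous on $(0,1]$ with density $F'(t)=\tfrac{n+\va}{d}\,t^{(n+\va)/d-1}F(1)$.

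The key remaining step is to write the left-hand side of \eqref{lem1-1} as a one-dimensional Stieltjes integral against $F$. Viewing $\x^\alpha\,d\x$ as a (signed, finite) measure on $K$ and pushing it forward under $f$, the layer-cake representation gives $\int_K\x^\alpha f(\x)^k\,d\x=\int_0^1 t^k\,dF(t)$. Substituting the density of $F$ and evaluating the beta-type integral $\int_0^1 t^{k+(n+\va)/d-1}\,dt=\big(k+(n+\va)/d\big)^{-1}$ yields
\[
\int_K\x^\alpha f(\x)^k\,d\x\,=\,\frac{n+\va}{d}\cdot\frac{1}{\,k+(n+\va)/d\,}\,F(1)\,=\,\frac{n+\va}{n+kd+\va}\,\int_K\x^\alpha\,d\x,
\]
which is exactly \eqref{lem1-1}.

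The main obstacle I anticipate is measure-theoretic rather than computational: since $f$ is assumed only measurable (not smooth), I cannot invoke the coarea formula in its differential form, and I must justify $\int_K\x^\alpha f^k\,d\x=\int_0^1 t^k\,dF(t)$ directly as the change of variables under the pushforward of the signed measure $\x^\alpha\,d\x$. The care needed here is mild, however, precisely because $F$ turns out to be absolutely continuous with an explicit density, so no singular part can arise. The integrability required throughout is automatic: $K$ is bounded and measurable, and $0\le f\le 1$ on $K$ forces $\vert\x^\alpha f^k\vert\le\vert\x^\alpha\vert$, providing an integrable dominating function.
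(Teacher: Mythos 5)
Your proof is correct, but it takes a genuinely different route from the paper. The paper follows Morosov--Shakirov: the substitution $x_1=t$, $x_i=tz_i$ together with homogeneity factors $\int_{\R^n}\phi(f(\x))\,\x^\alpha\,d\x$ into $d^{-1}A_\alpha\int_0^\infty u^{(n+\va)/d-1}\phi(u)\,du$, where $A_\alpha$ is a constant independent of $\phi$; the two choices $\phi={\rm I}_{[0,1]}$ and $\phi(t)=t^k{\rm I}_{[0,1]}(t)$ then give both sides of (\ref{lem1-1}) with the same $A_\alpha$, which cancels. You instead exploit the dilation structure $\{f\le t\}=t^{1/d}K$ directly, derive the exact power law $F(t)=t^{(n+\va)/d}F(1)$, and identify the left-hand side as $\int_0^1 t^k\,dF(t)$ via the pushforward of the signed measure $\x^\alpha\,d\x$. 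Each approach buys something: the paper's computation is a one-shot tool valid for arbitrary weights $\phi$ (this is what powers the companion results with $\phi(t)=\exp(-t)$ and the non-Gaussian integrals), whereas your argument, restricted to the sublevel set itself, is more elementary and arguably more watertight for merely measurable $f$ --- it avoids the chart issues in the substitution $x_1=t$ (which only parametrizes $\{x_1\neq0\}$ and needs a sign discussion for $t<0$) and never requires the auxiliary integral $A_\alpha$ to be examined for convergence; it also proves the identity verbatim for all real $k\ge 0$, not just $k\in\N$. One small point you should make explicit rather than wave at: the pushforward could in principle carry an atom at $t=0$, i.e.\ $\mu(\{f=0\})\neq0$; but boundedness of $K$ forces $\{f=0\}=\{0\}$ (the zero set of a PHF is a cone, and a nonzero ray in it would lie in $K$), so $F$ is continuous at $0$, $F(0)=0$, and the identification of $dF$ with the density $\frac{n+\va}{d}t^{(n+\va)/d-1}F(1)\,dt$ on all of $[0,1]$ is legitimate. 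With that remark added, your proof is complete.
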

\begin{proof}
To prove (\ref{lem1-1}) we use an argument already used in Morosov and Shakirov \cite{morosov1,morosov2}.
With $\alpha\in\N^n$, let $\talpha:=(\alpha_2,\ldots,\alpha_n)\in\N^{n-1}$ and define $\z:=(z_2,\ldots,z_n)$.

Let $\phi:\R_+\to\R$ be measurable and  consider the integral $\int_{\R^n}\phi(g(\x))\,\x^\alpha d\x$.
Using the change of variable $x_1=t$ and $x_i=tz_i$ for all $i=2,\ldots,n$, and 
invoking homogeneity, one obtains:
\begin{eqnarray*}
\int_{\R^n}\phi(f(\x))\,\x^\alpha\,d\x&=&\int_{\R^n}\phi(t^df(1,z_2,\ldots,z_n))\,t^{n+\vert\alpha\vert-1}\z^{\talpha}\,d(t,\z)\\
&=&d^{-1}\left(\int_0^\infty u^{(n+\vert\alpha\vert)/d-1}\phi(u)\,du\right)\times A_\alpha\\
\mbox{with $A_\alpha$}&=&\int_{\R^{n-1}}\z^{-\talpha}f(1,\z)^{-(n+\vert\alpha\vert)/d}\,d\z.
\end{eqnarray*}
Hence the choices $t\mapsto \phi(t):={\rm I}_{[0,1]}(t)$ and $t\mapsto \phi(t):=t^k {\rm I}_{[0,1]}(t)$ yield
\begin{eqnarray*}
d\int_{\{\x\,:\,f(\x)\leq 1\}}\x^\alpha d\x&=&A_\alpha\int_0^1u^{(n+\vert\alpha\vert)/d-1}\,du=\frac{A_\alpha d}{n+\vert\alpha\vert}\\
d\int_{\{\x\,:\,f(\x)\leq 1\}}f(\x)^k\,\x^\alpha d\x&=&A_\alpha\int_0^1u^{(n+kd+\vert\alpha\vert)/d-1}\,du=\frac{A_\alpha d}{n+kd+\vert\alpha\vert},
\end{eqnarray*}
respectively. And so (\ref{lem1-1}) follows.
\end{proof}

With $g\in\R[\x]_d$ being an homogeneous polynomial of degree $d$, consider now the matrix $\M_d(\lambda)$ of moments of order $2d$
associated with the Lebesgue measure on $G=\{\x\,:\,g(\x)\leq 1\}$,
that is, $\M_d(\lambda)$ is a real square matrix with rows and columns indexed by the monomials $\x^\alpha$, $\alpha\in\N^n_d$,
and with entries
\begin{equation}
\label{moment-2d}
\M_d(\blambda)[\alpha,\beta]\,=\,\int_{\{\x\,:\,g(\x)\leq 1\}}\x^{\alpha+\beta}\,d\x\,=:\,\lambda_{\alpha+\beta},\qquad\forall\,\alpha,\beta\in\N^n_d.
\end{equation}

So all entries of $\M_d(\lambda)$ are moments of order $2d$.
Our main result is as follows:
\begin{thm}
\label{thmain}
Let $g\in\R[\x]_{d}$ be homogeneous of degree $d$ with unknown coefficient vector
$\g\in\R^{s(d)}$ and with compact level set $G=\{\x\,:\,g(\x)\leq 1\}$. Assume that one knows the moments $\blambda=(\lambda_\alpha)$ for the Lebesgue measure on $G$, for every $\alpha\in\N^n$ with
$\vert\alpha\vert=2d$ and $\vert\alpha\vert=d$. Then:
\begin{equation}
\label{thmain-1}
\g\,=\,\frac{n+d}{n+2d}\:\M_d(\blambda)^{-1}\,\blambda^{(d)}
\end{equation}
where $\blambda^{(d)}=(\lambda_\alpha)$, $\alpha\in\N^n_d$, is the vector of all moments of order $d$.
\end{thm}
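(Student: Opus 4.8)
The plan is to show that the unknown coefficient vector $\g$ satisfies the linear system $\M_d(\blambda)\,\g=\tfrac{n+d}{n+2d}\,\blambda^{(d)}$ and, separately, that the moment matrix $\M_d(\blambda)$ is nonsingular; together these immediately give \eqref{thmain-1}. First I would compute the matrix--vector product $\M_d(\blambda)\,\g$ entry by entry. Fixing $\alpha\in\N^n_d$, and using that $g$ is homogeneous of degree $d$ so that $g(\x)=\sum_{\beta\in\N^n_d}g_\beta\,\x^\beta$, its $\alpha$-th component is
\[
\sum_{\beta\in\N^n_d}\M_d(\blambda)[\alpha,\beta]\,g_\beta
=\sum_{\beta\in\N^n_d}g_\beta\int_{\{\x:g(\x)\leq1\}}\x^{\alpha+\beta}\,d\x
=\int_{\{\x:g(\x)\leq1\}}\x^\alpha\,g(\x)\,d\x .
\]
Now I would invoke the Lemma with $f=g$ and $k=1$: since $\vert\alpha\vert=d$, formula \eqref{lem1-1} evaluates this integral as $\tfrac{n+d}{n+2d}\int_{\{\x:g(\x)\leq1\}}\x^\alpha\,d\x=\tfrac{n+d}{n+2d}\,\lambda_\alpha$. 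Letting $\alpha$ range over $\N^n_d$ this reads exactly $\M_d(\blambda)\,\g=\tfrac{n+d}{n+2d}\,\blambda^{(d)}$, and one sees that only moments of order $d$ (the right-hand side) and $2d$ (the matrix) are used.

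It remains to prove that $\M_d(\blambda)$ is invertible, which I expect to be the real point of the argument. I would show it is in fact positive definite. For arbitrary $\c=(c_\alpha)_{\alpha\in\N^n_d}$, set $h(\x):=\sum_{\alpha\in\N^n_d}c_\alpha\,\x^\alpha$, a homogeneous polynomial of degree $d$; then
\[
\la\c,\M_d(\blambda)\,\c\ra
=\sum_{\alpha,\beta\in\N^n_d}c_\alpha c_\beta\int_{\{\x:g(\x)\leq1\}}\x^{\alpha+\beta}\,d\x
=\int_{\{\x:g(\x)\leq1\}}h(\x)^2\,d\x\;\geq\;0,
\]
so $\M_d(\blambda)\succeq0$. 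If this integral vanishes, then $h=0$ Lebesgue-almost everywhere on $G$, and the key geometric fact is that $G$ has nonempty interior: since $g$ is continuous and, being homogeneous of positive degree, satisfies $g(0)=0<1$, the set $G$ contains a ball about the origin. Hence the continuous function $h$ vanishes on a nonempty open set, which forces the polynomial $h$ to be identically zero and thus $\c=0$.

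Therefore $\M_d(\blambda)\succ0$, in particular nonsingular, and left-multiplying the identity $\M_d(\blambda)\,\g=\tfrac{n+d}{n+2d}\,\blambda^{(d)}$ of the first step by $\M_d(\blambda)^{-1}$ yields \eqref{thmain-1}. The only nontrivial ingredient beyond the Lemma is the nondegeneracy of the quadratic form, which reduces to the elementary observation that $G$ is a full-dimensional body containing the origin in its interior.
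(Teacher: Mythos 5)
Your proposal is correct and follows essentially the same route as the paper: apply the Lemma with $k=1$ and $\vert\alpha\vert=d$ to obtain the linear system $\M_d(\blambda)\,\g=\tfrac{n+d}{n+2d}\,\blambda^{(d)}$, then invert $\M_d(\blambda)$. The paper merely asserts nonsingularity "from the fact that $G$ has nonempty interior," and your positive-definiteness argument (the quadratic form equals $\int_G h^2\,d\x$, and $G$ contains a ball about the origin since $g(0)=0<1$) is exactly the correct filling-in of that step.
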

\begin{proof}
Use (\ref{lem1-1}) with $k=1$ and $\vert\alpha\vert=d$ to obtain
\[\sum_{\beta\in\N^n:\,\vert\beta\vert=2d}g_\beta\,\lambda_{\alpha+\beta}\,=\,\frac{n+d}{n+2d}\,\lambda_\alpha,\qquad\forall\,\vert\alpha\vert=d,\]
or in matrix form
\[\M_d(\blambda)\,\g\,=\,\frac{n+d}{n+2d}\,\blambda_d,\]
from which the desired result follows if $\M_d(\blambda)$ is non singular. But this follows from the fact that $G$ has nonempty interior.
\end{proof}
There are alternative ways for obtaining $\g$ from the moments $\blambda$. It suffices to apply (\ref{lem1-1}) for a family $\mathcal{F}$ of multi-indices 
$\alpha\in\N^n$ whose cardinal $\vert\mathcal{F}\vert$ matches the dimension ${n+d-1\choose d}$ of the vector $\g$.

For instance, with $n=2$ and $d=2$ ($g$ is a quadratic form with vector of coefficients $\g=(g_{20},g_{11},g_{02})$), $\g$ can also be obtained by:
\[\g\,=\,\left[\begin{array}{ccc}
\lambda_{20}&\lambda_{11}&\lambda_{02}\\
\lambda_{30}&\lambda_{21}&\lambda_{12}\\
\lambda_{21}&\lambda_{12}&\lambda_{30}\end{array}\right]^{-1}\left[\begin{array}{c}\frac{n}{n+2}\lambda_{00}\\
\frac{n+1}{n+3}\lambda_{10}\\ \frac{n+1}{n+3}\lambda_{01}\end{array}\right],\]
provided that the above inverse matrix exists.

Similarly, with $n=2$ and $d=4$ ($g$ is a quartic form with vector of coefficients $\g=(g_{40},g_{31},g_{22},g_{13},g_{04})$),
$\g$ can also be obtained by
\[\g\,=\,\left[\begin{array}{ccccc}
\lambda_{40}&\lambda_{31}&\lambda_{22}&\lambda_{13}&\lambda_{04}\\
\lambda_{50}&\lambda_{41}&\lambda_{32}&\lambda_{23}&\lambda_{14}\\
\lambda_{41}&\lambda_{32}&\lambda_{23}&\lambda_{14}&\lambda_{05}\\
\lambda_{60}&\lambda_{51}&\lambda_{42}&\lambda_{33}&\lambda_{24}\\
\lambda_{42}&\lambda_{33}&\lambda_{24}&\lambda_{15}&\lambda_{06}\\
\end{array}\right]^{-1}\left[\begin{array}{c}\frac{n}{n+4}\lambda_{00}\\
\frac{n+1}{n+5}\lambda_{10}\\ \frac{n+1}{n+5}\lambda_{01}\\ \frac{n+2}{n+6}\lambda_{20}\\ \frac{n+2}{n+6}\lambda_{02}
\end{array}\right],\]
provided that the above inverse matrix exists.
\end{document}